\documentclass[11pt,reqno]{article}
\usepackage{amsgen, amsmath, amsfonts, amsthm, amssymb, enumerate,amscd,graphics,dsfont,comment,tikz-cd}

\oddsidemargin -.015in \evensidemargin -.015in \textwidth 6.6in \topmargin -.15in \textheight 9in
\newtheorem{defn}{Definition}[section]
\newtheorem{prop}[defn]{Proposition}

\newtheorem{thm}[defn]{Theorem}

\newtheorem{rem}[defn]{Remark}

\newtheorem {const}[defn]{Construction}

\newcommand {\OP}{{O^{+}}}
\newcommand {\OT}{{\tilde{O}}}
\newcommand {\OPT}{{\tilde{O}^{+}}}

\newcommand {\hL}{{\hat{L}}}
\newcommand {\DD}{{\mathcal D}}

\title{Indecomposable motivic cycles on  $K3$ surfaces of degree $2$}
\author{Ramesh Sreekantan}

\newcommand {\ZZ}{{\mathds Z}}

\newcommand {\XX}{{\mathcal X}}

\newcommand {\C}{{\mathds C}}

\newcommand {\CC}{{\mathcal C}}
\newcommand {\Q}{{\mathds Q}}

\newcommand {\OO}{{\mathcal O}}

\newcommand {\HH}{{\mathds H}}
\newcommand {\UH}{{\mathcal H}}
\newcommand {\LL}{{\mathcal L}}

\newcommand {\M}{{\mathcal M}}

\newcommand {\CP}{{\mathds P}}

\newcommand {\D}{{\mathcal D}}

\newcommand {\TCC}{\tilde{\mathcal C}}

\newcommand {\CPP}{{\CP \times \CP}}

\newcommand{\TC}{\tilde{C}}

\newcommand{\BX}{\bar{X}}

\newcommand {\KP}{{{\mathcal P}}}

\setcounter{tocdepth}{3}

\def\Ker{\operatorname{Ker}}

\def\div{\operatorname{div}}
\def\deg{\operatorname{deg}}
\def\mod{\operatorname{mod}}
\def\dim{\operatorname{dim}}
\def\Spec{\operatorname{Spec}}

\def\ord{\operatorname{ord}}

\author{Ramesh Sreekantan}



\begin{document}
	\baselineskip=17pt
	
	\maketitle

	\begin{abstract}
		
		In this paper we construct new indecomposable motivic cycles in the group $H^3_{\M}(X,\Q(2))$ where $X$ is a degree $2$ $K3$ surface. This generalizes our  construction in \cite{sree-simple,sree-split} for Kummer surfaces of Abelian surfaces as well as the recent work of Ma and Sato \cite{masa} on degree $2$ $K3$ surfaces. 
		
	\end{abstract}


\section{Introduction}
	
	\subsection{Degree 2 $K3$ surfaces}
	
A degree $2$ $K3$ surface $X$ is a $K3$ surface on which there is a line bundle $\LL$ such that $\LL^2=(\LL.\LL)=2$. This is equivalent to the $K3$ surface being a double cover of $\CP^2$ ramified at a sextic. The linear system $|\LL|$ gives a map to $\CP^2$ which is ramified at a sextic. 

Conversely if $S$ is a sextic  in $\CP^2$ with at most isolated rational double point singularities, then the desingularisation of the double cover ramified at $S$ is a $K3$ surface of degree $2$. 

Generically the rank of the Neron-Severi group (the Picard number $\rho(X)$) is $1$ - though there are some interesting special cases when the rank is much larger. If the sextic is a product of six lines, for instance,  then the rank is $16$ and the moduli space of such $K3$ surfaces is four dimensional. 

On this $4$-fold there are two notable subvarieties. If the six lines are futher tangent to a conic, then the $K3$ surface is a Kummer surface of a principally polarised Abelian surface and the Picard number is $17$ \cite{biwi}.  If the six lines degenerate such that three of them are concurrent then Clingher and Malmendier show that the $K3$ surface is the Kummer surface of an Abelian surface with a polarization of type $(1,2)$ \cite{clma}. Other cases have been studied by Yu and Zheng \cite{yuzh}.

\subsection{Motivic cycles}

In this paper we construct elements in the motivic cohomology group $H^3_{\M}(X,\Q(2))$ where $X$ is a degree $2$ $K3$ surface. The group $H^3_{\M}(X,\Q(2))$ has many avatars - it is usually defined as a graded piece of $K_1(X)$ with respect to the Adams filtration, but for us we will use the fact that it is the same as the Bloch higher Chow group $CH^2(X,1)$.

The cycles we construct are generically {\em indecomposable} -- namely they are non-trivial elements of the quotient of the motivic cohomology group by subgroup of cycles coming from lower graded pieces of the filtration. In a sense they are `new' cycles.

In general it is not so easy to find indecomposable cycles and the discovery of them is a minor cause for celebration. They have a number of applications - from algebraicity of values of Greens functions \cite{mell},\cite{sree-simple},\cite{sree-split}, \cite{kerr} to the Hodge-$\D$-conjecture \cite{chle}, \cite{sree2008},\cite{sree2014} to torsion in co-dimension $2$ \cite{mild},\cite{spie}. For varieties over number fields, the existence of indecomposable cycles is predicted by the Beilinson conjectures though there are only a few cases when these have been realised.

The method of construction is similar to that in \cite{sree-simple} where we studied the case of  Abelian surfaces in some detail. The idea is to show the existence of certain rational curves on $\CP^2$. In this paper we show that the same construction can be generalised to this much larger class of varieties, The Kummer surfaces of principally polarised Abelian surfaces are special cases of $K3$ surfaces of degree $2$.

As this paper was nearing completion we became aware of the preprint of Ma and Sato \cite{masa} where they also construct indecomposable motovic cycles in this case. Their methods of proof are quite different though. Their construction can be viewed as a special case when the rational curve is a line.

\subsection{Acknowledgements}

The history of this article is linked with the online seminar on $K3$ surfaces which has resulted in these proceedings.  I spoke in the seminar on the construction in the article \cite{sree-simple} which was about Kummer surfaces of principally polarised Abelian surfaces, Prof. Armand Brumer was in the audience and he asked about the non-principally polarised case. When I dug into the literature I became aware of the work of Clingher and Malmendier \cite{clma} on the $(1,2)$ case and it was clear that the argument extends to that case. It soon became clear that the argument extends to the general degree $2$ case as well. So I would first like to thank Devendra Tiwari and the rest of the organisers for inviting me to speak and second would like to thank Armand Brumer for his comment. I would also like to thank Shouhei Ma, Ken Sato and Subham Sarkar for their comments. Tom Graber and Ritwik Mukherjee for their help with questions on enumerative geometry and  Arvind Nair for his remarks on orthogonal Shimura varieties. I would also like to thank the referee for their numerous comments and suggestions.  Finally I would like to thank the Indian Statistical Institute for their support.

\section{$K3$ surfaces of degree 2}

\subsection{$K3$ surfaces as double covers of $\CP^2$ ramified at a sextic}

Let  $X$ be a $K3$ surface with a line bundle $\LL$ such that $\LL^2=2$. The linear system $|\LL|$ has no fixed points and maps to $\CP^2$. This is a double cover ramified at a sextic $S=S_X$. We call $S_X$ the {\em associated sextic} of $X$. The hyperplane section on $\CP^2$ pulls back to a double cover of a line ramified at six points - hence is a genus $2$ curve in $|\LL|$ \cite{maye}. On the complement of an infinite union of closed subvarieties,  the Picard number $\rho(X)$  of  such $K3$ surfaces is $1$ generated by this curve \cite{shah}. Such $K3$ surfaces are called {\em generic}.

Conversely suppose $S$ is a sextic in $\CP^2$  with only isolated double point singularities. Then the minimal desingularisation of the double cover ramified at $S$ is a $K3$ surface of degree $2$. If there are  double points on the sextic the Picard number of the   $K3$ surfaces is larger as the exceptional cycles in the desingularization give new elements of the Neron-Severi group. 

The compactification of the moduli space of smooth sextics was first studied by Shah \cite{shah} and more recently Yu and Zheng \cite{yuzh} have studied the singular sextic cases.

\subsection{Lattices and the Moduli of $K3$ surfaces}

In this section we state some results about the moduli space of $K3$ surfaces of degree $2$. These results can be found in Bruinier \cite{brun123}. We have followed Peterson \cite{pete}.

If $X$ is a $K3$ surface, the group $H^2(X,\ZZ)$  is of rank $22$. As a lattice it is 
$$H^2(X,\ZZ)=U^{\oplus 3} \bigoplus E_8(-1)^{\oplus 2}$$
where $U$ is a unimodular lattice of rank $2$ and signature $(1,1)$ and $E_8(-1)$ is the root lattice of the Lie algebra $E_8$ with the $(-1)$ indicating that we take the negative of the standard positive definite quadratic form. Hence this is a lattice of signature $(3,19)$ 

In $H^2(X,\ZZ)$ the class of $\LL$ gives us an element $\LL$ of square $2$. Let $L_2$ be the orthogonal complement $\LL^{\perp}$ of $\LL$ in $H^2(X,\ZZ)$. This is a lattice of signature $(2,19)$  
$$L_2=<-2>\bigoplus U^{\oplus 2} \bigoplus E_8(-1)^{\oplus 3}$$
where $<-2>$ is the rank 1 lattice generated by an element $w$ such that $(w,w)=-2$. 

The moduli of $K3$ surfaces of degree $2$ can be realised as an orthogonal Shimura variety associated to the lattice $L_2$ as follows.

For a lattice $L$ let $O(L)$ denote the group of isometries of $L$. Let $\hL$ denote the dual lattice 
$$\hL=\{m \in L \otimes \Q|(m,n) \in \ZZ \text{ for all } n \in L\}$$
The discriminant group is $D_L=\hL/L$. Let $\OT(\hL)=\Ker\{O(L) \rightarrow O(D_L)\}$. Let $\OP(L)$ denote the subgroup of $O(L)$ of spinor norm $1$ and $\OPT(L)=\OT(L) \cap \OP(L)$. 
		
Let 
$$\DD \cup \overline{\DD}=\{\C z|\;(z,z)=0,(z,\bar{z})>0\} \subset \CP(L \otimes \C)$$
Choose a component $\DD$. The moduli space of $K3$ surfaces of degree $2$ with transcendental lattice $L$ is 
$$\M_L=\OPT(L) \backslash \DD$$
An alternative description is as a quotient of the Hermitian symmetric space of $O(L)$. If $L$ is of signature $(2,n)$ this is 

$$\HH=\frac{O(2,n)}{O(2) \times O(n)}$$
The moduli space $\M$ is 
$$\M=\OPT(L) \backslash \HH$$
When $L$ is the lattice $L_2$ this moduli space $\M_2$ is a $19$ dimensional variety. If one considers singular sextics, the transcendental lattice becomes smaller as the exceptional fibres are elements of the Neron-Severi group. One interesting submoduli is when the sextic $S$ degenerates in to six lines. Here the Picard number is $16$ as the Neron-Severi is generated by the class of $\LL$ and the $15$ exceptional fibres over the nodes of $S$. The transcendental lattice is hence of signature $(2,4)$ and the moduli space is $4$ dimensional. 

In this there is a particularly interesting divisor -- if the six lines are tangent to a conic. Such  $K3$ surfaces have Picard number $17$ and the transcendental lattice is of signature $(2,3)$. The new cycle is obtained as a component of the double cover of the conic. 
These $K3$ surfaces are the Kummer surfaces of principaly polarised abelian surfaces  and the moduli is nothing but an alternative description of the Siegel modular threefold.   

There are several other moduli corresponding to nodal sextics. These are described in Yu and Zheng. \cite{yuzh}. Some of them correspond to $K3$ double covers of del Pezzo surfaces.

\subsection{$K3$ surfaces with larger Picard number}

As mentioned above, when the sextic has nodal singulaties, the corresponding $K3$ surface has additional elements in the Neron-Severi group coming from the exceptional fibres. In this section we will determine another condition under which there are new elements in the Neron-Severi group. In what follows we use $\LL$ to denote the class of the line bundle $\LL$ in the Neron-Severi group. Let $D$ be an element of $NS(X)$ which is not a multiple of $\LL$. The moduli of those $K3$  surfaces of degree $2$ where the Neron-Severi group is generated by $\LL$ and $D$ is a divisor on $\M_2$. 

Recall that $(\LL,\LL)=2$. Consider the intersection matrix of the lattice generated by $\LL$ and $D$, 
$$ A=\begin{pmatrix} 2 & (\LL.D) \\ (\LL.D) & (D.D) \end{pmatrix}$$
Let $\Delta=\Delta(D)=-\det(A)=(\LL.D)^2-2D^2$ and $\delta=(D.\LL) \mod 2$, so $\delta=\{0,1\}$. From the Hodge Index theorem we have that $\Delta(D) \geq 0$ and $\Delta >0$ if and only if  $D$ is not a multiple of $\LL$.

The {\em irreducible Noether-Lefschetz divisor} $\KP_{\Delta,\delta}$ is the closure of the set of polarised $K3$ surfaces where the Picard lattice is a rank $2$ lattice with discriminant $\Delta(D)=\Delta$ and class $\delta$. 

For number theoretic applications one considers Heegner divisors which are sums of these $\KP_{\Delta,\delta}$ for certain choices of $\Delta$. \cite{pete}. Numbers determined by them  are known to appear as coefficients of modular forms.

It is known that the Picard number of a Abelian surface with real multiplication  is at least  $2$ which translates to the corresponding Kummer surface having Picard number at least $18$. Birkenhake-Wilhelm \cite{biwi} showed, generalizing Humbert, that this corresponds to certain special rational curves on the associated $\CP^2$. Conversely, if one has special rational curves on $\CP^2$ there are additional cycles in the corresponding $K3$ surface. 

Along the lines of their argument, we have the following. 


\begin{prop}
	Let $X$ be a  $K3$ surface of degree $2$ and $S_X$ the associated sextic in $\CP^2$. Let $\pi:X \rightarrow \CP^2$ denote the double cover of $\CP^2$ ramified at $S_X$ induced by $\LL$.  Suppose there exists a rational curve  $Q$ of degree $d \neq 6$ in $\CP^2$ such that $Q$ meets $S_X$ only at points of {\em even} multiplicity - including possibly nodes of $S_X$. 
	
	 Then $Q$ determines an element in the Neron-Severi group of $X$ whose class is not a multiple of the class of $\LL$. In particular, the moduli point of $X$ lies on $\KP_{\Delta,\delta}$ for some $\Delta$ and $\delta$. 
	
\label{specialcurve}
\end{prop}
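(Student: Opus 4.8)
The plan is to realise the new class as a component of the pullback $\pi^{-1}(Q)$, using the even-multiplicity hypothesis to force this pullback to split. First I would pass to the normalization $\nu:\tilde{Q}\cong\CP^1\to Q\subset\CP^2$, which is legitimate since $Q$ is rational. Writing the double cover locally as $w^2=f$, where $f$ is a defining section of $\OO_{\CP^2}(6)=\OO(S_X)$, the pullback $\nu^*f$ is a section of $\OO_{\CP^1}(6d)$ whose zeros lie over $Q\cap S_X$ with multiplicities equal to the local intersection numbers $I_p(Q,S_X)$. The assumption that all of these are even means $\nu^*f=g^2$ for some $g\in H^0(\CP^1,\OO(3d))$, so the pulled-back cover $\{w^2=\nu^*f\}$ splits into the two graphs $w=\pm g$; that is, $\pi$ restricted to $\tilde Q$ is the trivial disconnected double cover. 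Since $d\neq 6$ forces $Q\neq S_X$, the map $\pi$ is unramified over the generic point of $Q$, the divisor $\pi^*Q$ is reduced, and we obtain $\pi^*Q=Q^{+}+Q^{-}$ with the two sheets interchanged by the covering involution $\iota$.

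Next I would record the numerical relations. Since $[Q]=dH$ and $\pi^*H=\LL$, we have $[Q^{+}]+[Q^{-}]=\pi^*[Q]=d\LL$, while $\iota^*[Q^{+}]=[Q^{-}]$ and $\iota^*\LL=\LL$ because $\LL$ is pulled back from $\CP^2$. Hence $[Q^{+}]$ is a multiple of $\LL$ if and only if $[Q^{+}]=[Q^{-}]=\frac{d}{2}\LL$, equivalently iff the anti-invariant class $[Q^{+}]-[Q^{-}]$ vanishes. Setting $D=[Q^{+}]$, the projection formula gives $D\cdot\LL=\pi_*(Q^{+})\cdot H=Q\cdot H=d$, and from $(\pi^*Q)^2=2(Q\cdot Q)_{\CP^2}=2d^2$ together with $(Q^{+})^2=(Q^{-})^2$ one gets $(Q^{+})^2=d^2-Q^{+}\!\cdot Q^{-}$. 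The intersection $Q^{+}\!\cdot Q^{-}$ is supported over $Q\cap S_X$, where the two sheets $w=\pm g$ come together; a local computation with $\nu^*f=t^{2m}(\text{unit})$ gives a contribution $\frac{1}{2}I_p(Q,S_X)$ at each point, so that $Q^{+}\!\cdot Q^{-}=\frac{1}{2}(Q\cdot S_X)=3d$ in the simplest case.

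In that case $(Q^{+})^2=d^2-3d$ and the discriminant is $\Delta=(\LL\cdot D)^2-2D^2=6d-d^2=d(6-d)$, so the Gram determinant of $\{\LL,D\}$ equals $-\Delta=d(d-6)$. This is nonzero exactly because $d\neq 6$ (and $d\geq 1$), whence $\LL$ and $D$ are linearly independent in $NS(X)\otimes\Q$; thus $D=[Q^{+}]$ is not a multiple of $\LL$ and the moduli point of $X$ lies on some $\KP_{\Delta,\delta}$. The value $d=6$ is precisely the degenerate case in which $Q$ is itself a sextic and $\pi^{-1}(Q)$ is cut out by $\frac{d}{2}\LL=3\LL$.

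The main obstacle is the bookkeeping at the singular points, which corrects the ``simplest case'' above. A rational plane curve of degree $d\geq 3$ is necessarily singular, and a node of $Q$ off $S_X$ either becomes a node of each sheet $Q^{\pm}$ (when its two local branches lift to the same sheet) or contributes two extra points to $Q^{+}\cap Q^{-}$ (when the monodromy of $g$ separates them); similarly one must pass to the minimal desingularization and track the exceptional $(-2)$-curves over the nodes of $S_X$ through which $Q$ is permitted to pass. This alters $Q^{+}\!\cdot Q^{-}$, hence $(Q^{+})^2$, by a configuration-dependent nonnegative amount, replacing $\Delta=d(6-d)$ by $\Delta=d(6-d)+(\text{correction})$. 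For $d<6$ the correction only increases the already strictly positive value $d(6-d)$, so $\Delta>0$ and the conclusion is immediate; the genuine content, for $d>6$, is to verify that $\Delta$ remains strictly positive, i.e. that $[Q^{+}]-[Q^{-}]\neq 0$, which is where the even-multiplicity condition together with the Hodge index theorem must be used to exclude the boundary case $\Delta=0$.
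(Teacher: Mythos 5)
Your route is essentially the paper's route: the paper also splits $\pi^{-1}(Q)$ into two components (disconnecting the normalized cover via simple connectivity of $\CP^1$ rather than extracting the square root $g$ of $\nu^*f$, an interchangeable step), uses the covering involution $\iota$ to compare the two sheets with $\LL$, and computes the discriminant of the sublattice $\langle \LL, D\rangle$. Your ``simplest case'' numbers agree with the paper's: it asserts $C_1^2=(C_1.C_2)=3d$, which is exactly your $Q^+\cdot Q^-=3d$, and concludes $\Delta=0\Rightarrow d=6$. Where you differ is that you refuse to assume the two sheets meet only over $Q\cap S_X$, and your observation that a node of $Q$ whose branches are separated by the monodromy of $g$ contributes $2$ to $Q^+\cdot Q^-$ is correct and exposes a step the paper passes over in silence. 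Your own bookkeeping makes this sharp: with $s$ separating nodes one finds $\Delta=d(6-d)+4s$ (and, with care at the exceptional $(-2)$-curves, the same formula persists when $Q$ passes through nodes of $S_X$), so for $d>6$ the paper's asserted equality $Q^+\cdot Q^-=3d$ would force $\Delta<0$, contradicting the Hodge index theorem; in other words the paper's proof tacitly assumes $s=0$, an assumption that for $d>6$ can never hold.

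The genuine gap is that your proposal, as you concede in its last sentence, does not prove the statement for $d>6$: you say the boundary case $\Delta=0$ ``must be excluded using the even-multiplicity condition together with the Hodge index theorem,'' but the Hodge index theorem is precisely what gives $\Delta\geq 0$ and is structurally incapable of ruling out equality. Moreover equality is not numerically absurd: $\Delta=0$ means $[Q^+]=[Q^-]=\frac{d}{2}\LL$, i.e.\ $Q^+$ is an irreducible rational curve in $\left|\frac{d}{2}\LL\right|$ not fixed by $\iota$; writing $d=2m$ and $s=m(m-3)$, every identity checks out, since then $p_a(Q^+)=m^2+1$ matches the number $\binom{2m-1}{2}-s$ of non-separating nodes available to $Q^+$. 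For $m=3$ (i.e.\ $d=6$) such split preimages genuinely occur on every degree-$2$ K3, namely $\pi^{-1}\{a_3^2=\lambda^2 f_6\}=Q^+\cup Q^-$ with $[Q^{\pm}]=3\LL$, which is exactly why $d=6$ must be excluded; excluding the analogous configurations for $d>6$ therefore requires a geometric input that neither your argument nor, strictly read, the paper's supplies. By contrast, for $d<6$ your argument is complete and in fact more careful than the paper's: since the corrections only increase $Q^+\cdot Q^-\geq 3d$, you get $\Delta\geq d(6-d)>0$ unconditionally, and the conclusion follows.
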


\begin{proof} The Picard number of $\CP^2$ is $1$ so the class of $Q$ is  a multiple of the class of the hyperplane section $H$. Since it is of degree $d$, $Q \in |dH|$. $\pi^*(H)=\LL$ hence $\pi^{-1}(Q) \in |\LL^d|$. In particular, it is not a new element. So we cannot simply use the preimage of $Q$. 
	
Let $C=\pi^{-1}(Q)$ be the double cover of $Q$ induced by the map $\pi:X \rightarrow \CP^2$. Let $\eta:\TC \longrightarrow C$ denote its normalization. The ramification points of $\pi:C \longrightarrow Q$ are the points $S_X \cap Q$. By assumption $S_X$ and $Q$ meet with even multiplicity. In  the double cover $C$ these are nodal points.  In the normalization $\TC$, these are no longer ramified. Since $Q$ is rational the normalization $\TC$ is then  an {\em unramified} double cover of $\CP^1$. This cannot be irreducible as $\CP^1$ is simply connected and does not have an irreducible unramified double cover. Since it is a double cover it has two components, say $\TC_1$ and $\TC_2$. Hence $C=C_1 \cup C_2$ for some possibly singular rational curves $C_1$ and $C_2$. 

We claim the class of $C_1$ (or $C_2$) is not  a multiple of the class of $\LL$. To do this we have to compute $\Delta(C_1)$. Recall that if $C_1$ is a multiple of $\LL$ then $\Delta(C_1)=0$. 

Let $\iota$ be the involution determined by the double cover. Then $\iota(C_1)=C_2$. On the other hand $\iota$ acts trivially on the class of $\LL$. If the class of $C_1$ is a multiple of the class of $\LL$ then $C_1$ and $C_2$ are equivalent. Then 
$$C_1^2=(C_1.C_2)=3d$$
as $C_1$ and $C_2$ meet at the $3d$ points of intersection lying over $Q \cap S_X$.

Further $C_1+C_2=\pi^*(dH)=\LL^d$. So 
$$(C_1.\LL)=\frac{1}{d}(C_1.(C_1+C_2))=\frac{1}{d}(2 C_1^2)=6$$
This gives $\Delta(C_1)=2C_1^2-(C_1.\LL)^2=6d-36$. On then other hand, it is $0$. This implies  $d=6$. However, we have assumed $d \neq 6$.

\end{proof}

In fact the theorem likely holds for higher genus curves as well \cite{biwi}, Proposition 6.4. We will only be concerned with the case when the curve is rational in what follows and hence will restrict ourselves to that case. 

The converse need not be true. Namely there exists $\KP_{\Delta,\delta}$ such that the extra cycle need not be represented by a {\em rational} curve. 

A rational curve $Q$ which meets the sextic $S$ at points of even multiplicity determines some nodes and orders of tangency, Let $\KP_Q$ denote the component of  $\KP_{\Delta(Q),\delta}$ where, for a $K3$ surface $X$ corresponding to a point on $\KP_{\Delta(Q),\delta}$, there is a rational curve $Q_X$ which meets $S_X$  is exactly the same manner.

If $S_X$ is smooth then the only possibility is that $Q_X$ is tangent to $S_X$ at some points. However, if $S_X$ has nodes and components, $P_{\Delta(D),\delta}$ could have more components. For instance, as mentioned above, nodal sextics correspond to interesting submoduli of the moduli of degree $2$ $K3$ surfaces. In the case when the sextic is a union of six lines $\ell_i$  tangent to a conic it is known that the $K3$ surface $X$ is the Kummer surface of a principally polarized  Abelian surface. Here the sextic has $15$ nodes $q_{ij}=\ell_i \cap \ell_j$  corresponding to the points of intersection of the lines. In this case $\Delta$ is called the {\em Humbert invariant}  and corresponds to the moduli of Abelian surfaces with endomorphism ring containing $\ZZ[\sqrt{\Delta}]$. For a generic Kummer surface the Picard number is $17$ and on these Humbert surfaces the Picard number is at least $18$. 

As an example, if there is a second conic passing thought $5$ of the $15$ nodes - hence it meets five of the lines at these points and suppose it is tangent to the remaining line, then $\Delta=5$ and $\KP_{\Delta(Q),\delta}$ is the moduli of Kummer surfaces of Abelian surfaces with real multiplication by $\ZZ({\sqrt 5})$. The conics passing through the five points $q_{12},q_{23},q_{34},q_{45}$ and $q_{51}$ and tangent to $\ell_6$ determine a component of $\KP_{\Delta(Q),\delta}$ and similarly, the conics passing through $q_{12},q_{23},q_{34},q_{46},q_{61}$ and tangent to $\ell_5$ will determine a different component. This special  case of Kummer surfaces of simple Abelian surfaces  was studied in some detail in \cite{sree-simple}.

In the case when the sextic degenerates to six lines three of which are concurrent the $K3$ surface is the Kummer surface of an Abelian surface with polarization of type $(1,2)$ \cite{clma}. Once again the $\KP_{\Delta,\delta}$ will correspond to Abelian surfaces with extra endomorphisms.

	\section{Motivic cycles} 
	\subsection{Presentation of cycles in $H^3_{\M}(X,\Q(2))$.}
	
	Let $X$ be a surface over a field $K$. The group $H^3_{\M}(X,\Q(2))$ has the following presentation. It is generated by sums of the form 
	$$\sum (C_i,f_i)$$
	where $C_i$ are curves on $X$ and $f_i$ functions on them satisfying the co-cycle condition 
	$$\sum \div(f_i)=0$$
	Equivalently these can be though of as codimensional  $2$ subvarieties $Z$ of $X \times \CP^1$ such that 
	$$Z\cdot (X \times \{0\} -X \times \{\infty\})=0$$
	 The cycle given by the sum of the graphs of $f_i$ in $X \times \CP^1$ has this property. 
	
	Relations are given by the Tame symbol of a pair of functions. If $f$ and $g$ are two functions on $X$, the  Tame symbol of the pair
	$$\tau(f,g)=\sum_{Z \in X^1} \left(Z,(-1)^{\ord_g(Z)\ord_f(Z)}\frac{f^{\ord_g(Z)}}{g^{\ord_f(Z)}}\right)$$
	satisfies the co-cycle condition and  such cycles are  defined to be $0$ is the group. Here $X^1$ is the set of irreducible codimensional one subvarieties. 
	
	One example of an element of this group is a cycle of the form $(C,a)$ where $a$ is a nonzero constant function and $C$ is a curve on $X$. This trivially satisfies the cocycle condition. More generally, if $L/K$ is a finite extension and $X_L=X \times_{\Spec(K)} \Spec(L)$ then one has a product map 
	$$\bigoplus_{L/K} H^2_{\M}(X_L,\Q(1)) \otimes H^1_{\M}(X_L,\Q(1)) \longrightarrow \bigoplus_{L/K} H^3_{\M}(X_L,\Q(2)) \stackrel{Nm_{L:K}}{\longrightarrow} H^3_{\M}(X,\Q(2))$$
		The image of this is the group of {\em decomposable cyles} $H^3_{\M}(X,\Q(2))_{dec}$. The group of {\em indecomposable cycles} is the quotient group
		$$H^3_{\M}(X,\Q(2))_{indec}=H^3_{\M}(X,\Q(2))/H^3_{\M}(X,\Q(2))_{dec}$$
		In general it is not clear how to construct non-trivial elements of this group.  
	
	A way of constructing cycles in this group is as follows. Since we use it in the main construction we label it a proposition. 
	
	\begin{prop}  Let $Q$ be a {\em nodal rational curve} on $X$ with node $P$. Let $\nu:\tilde{Q}\rightarrow Q$ be its  normalization in the blow up $\pi:\tilde{X} \longrightarrow X$ at $P$  The strict transform $\tilde{Q}$ meets the exceptional fibre $E_P$ at two points $P_1$ and $P_2$. Both $\tilde{Q}$ and $E_P$ are rational curves. Let $f_P$ be a function with $\div(f_P)=P_1-P_2$ on $\tilde{Q}$ and similarly let $g_P$ be a function with $\div(g_P)=P_2-P_1$ on $E_P$. Then 
		$$\tilde{Z}_P=(\tilde{Q},f_P)+(E_P,g_P)$$
		is an element of $H^3_{\M}(\tilde{X},\Q(2))$ and its pushforward $Z_P=\pi_*(\tilde{Z}_P)$ is an element of $H^3_{\M}(X,\Q(2))$.
		\label{rcconstruction}
		
		This is cycle is well defined up to decomposable cycles as the functions $f_P$ and $g_P$ are only defined up to scalars. To further determine the cycle we can choose a point on $\tilde{Q}$ and $E_P$ each and require the value of $f_P$ and $g_P$ to be $1$ a those points. 
	\end{prop}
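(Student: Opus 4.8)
The plan is to verify the two assertions in turn, both of which reduce to elementary facts once the local geometry of the blow-up is pinned down. First I would record the geometry at the node. Since $P$ is a node of $Q$, it is an ordinary double point with two \emph{distinct} tangent directions; blowing up $X$ at $P$ separates the two branches, so the strict transform $\tilde{Q}$ meets the exceptional curve $E_P$ in exactly two distinct points $P_1,P_2$, one for each branch. Because $Q$ is a rational curve with a single node, its normalization $\nu:\tilde{Q}\to Q$ is isomorphic to $\CP^1$, and $E_P\cong\CP^1$ as well. The existence of $f_P$ and $g_P$ is then immediate: on $\CP^1$ one has $\Pic(\CP^1)=\ZZ$ via degree, so every divisor of degree $0$ is principal. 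Hence there is a rational function $f_P$ on $\tilde{Q}$ with $\div(f_P)=P_1-P_2$ and a rational function $g_P$ on $E_P$ with $\div(g_P)=P_2-P_1$, each unique up to a multiplicative scalar in $K^*$.

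Next I would check the cocycle condition for $\tilde{Z}_P=(\tilde{Q},f_P)+(E_P,g_P)$. Using the presentation of $H^3_{\M}(\tilde{X},\Q(2))=CH^2(\tilde{X},1)$ recalled above, I must show that the associated $0$-cycle $\div_{\tilde{Q}}(f_P)+\div_{E_P}(g_P)$ vanishes on $\tilde{X}$. The key observation is that $P_1$ and $P_2$ are the \emph{same} points of $\tilde{X}$ whether viewed on $\tilde{Q}$ or on $E_P$, since by definition they are the intersection points $\tilde{Q}\cap E_P$. Pushing both divisors forward to $0$-cycles on $\tilde{X}$ therefore gives $(P_1-P_2)+(P_2-P_1)=0$, so the cocycle condition holds and $\tilde{Z}_P$ defines a class in $H^3_{\M}(\tilde{X},\Q(2))$.

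For the second assertion I would invoke the covariant functoriality of Bloch's higher Chow groups for proper morphisms. The blow-down $\pi:\tilde{X}\to X$ is projective, in particular proper, and since both $\tilde{X}$ and $X$ are surfaces the proper push-forward preserves codimension, carrying $CH^2(\tilde{X},1)$ to $CH^2(X,1)$. Thus $Z_P=\pi_*(\tilde{Z}_P)$ is automatically an element of $H^3_{\M}(X,\Q(2))$, with no further verification needed. If a more explicit description is wanted, one notes that $\pi|_{\tilde{Q}}=\nu$ is birational of degree $1$, so $\pi_*(\tilde{Q},f_P)=(Q,f_P)$ with $f_P$ regarded in $k(Q)=k(\tilde{Q})$, while $\pi_*(E_P,g_P)=0$ because $\pi$ contracts $E_P$ to the point $P$ and the image cycle $\{P\}\times\CP^1$ is degenerate. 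In this language the role of the blow-up is precisely to realize the a priori ill-defined pair $(Q,f_P)$ on the singular curve $Q$ as a genuine cocycle.

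For the well-definedness statement, changing $f_P$ to $\lambda f_P$ with $\lambda\in K^*$ alters $(\tilde{Q},f_P)$ by the cycle $(\tilde{Q},\lambda)$, a constant function on a curve, which is decomposable; the same applies to $g_P$. Hence $\tilde{Z}_P$, and therefore $Z_P$, is canonical modulo $H^3_{\M}(\,\cdot\,,\Q(2))_{dec}$, and normalizing $f_P$ and $g_P$ to take the value $1$ at chosen base points removes the ambiguity. I do not expect any serious obstacle here: the one point deserving care is that a node has two distinct branches, which is exactly what guarantees $P_1\neq P_2$ and hence that the prescribed degree-$0$ divisors are genuinely nontrivial. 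Everything else is the principality of degree-$0$ divisors on $\CP^1$ together with proper push-forward.
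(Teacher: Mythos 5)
Your proposal is correct, and it follows exactly the route the paper leaves implicit: the paper states Proposition \ref{rcconstruction} as a construction without a written proof, the content being precisely that $\div(f_P)+\div(g_P)=(P_1-P_2)+(P_2-P_1)=0$ on $\tilde{X}$ (the cocycle condition), existence of $f_P,g_P$ by principality of degree-zero divisors on $\CP^1$, and proper push-forward along the blow-down. Your added observations --- that the node's two distinct branches guarantee $P_1\neq P_2$, and that $\pi_*(E_P,g_P)$ is the degenerate cycle $\{P\}\times\CP^1$ so the blow-up exactly legitimizes the pair $(Q,f_P)$ on the singular curve --- are accurate refinements of the same argument.
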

A priori  it is not clear that if this cycle is indecomposable.  We will show that is the case in many instances. For this we need to use the localization sequence in motivic cohomology. 

\subsection{The localization sequence}
\label{localization}

If $\XX \rightarrow S$ is a family of varieties over a base $S$ with $X_{\eta}$ the generic fibre and $X_s$ the fibre over a closed subvariety $s$  then one has a localization sequence relating the three. 
$$ \cdots \rightarrow H^3_{\M}(X_{\eta},\Q(2) \stackrel{\partial}{\longrightarrow} \bigoplus_{s \in S^1} H^2_{\M}(X_s,\Q(1)) \longrightarrow H^4_{\M}(\XX,\Q(2)) \rightarrow \cdots $$
where $S^1$ denotes the set of irreducible closed  subvarieties of codimension $1$. The boundary map of an element 
$Z=\sum_i (C_{\eta,i},f_{\eta,i})$ is given as follows. Let $\CC$ denote the closure of $C_{\eta}$ in $\XX$. Then 
$$\partial(Z)=\sum_i \div_{\CC_i}(f_{\eta,i})$$
The cocycle condition $\sum_i \div_{C_{\eta,i}}(f_{\eta,i})=0$ implies that the `horizontal' divisors cancel out and only the components in some of the  fibres survive. 

Let $\XX$ be a family of $K3$ surfaces. To check for indecomposability one can use the boundary map $\partial$. Over the algebraic closure of the base field, a decomposable cycle in $H^3_{\M}(X_{\eta},\Q(2))$ is given by $(C_{\eta},a_{\eta})$ where $C_{\eta}$ is a curve on $X_{\eta}$ and $a_{\eta}$ a constant (that is, a function on the base $S$). 

The boundary map is particularly simple to compute in the case of a decomposable element 
$$\partial((C_{\eta},a_{\eta}))=\sum \ord_s(a_{\eta}) C_s$$
where $C_s$ is the restriction of the closure $\CC$ of $C_{\eta}$ to the fibre over $s$. Hence in the boundary of decomposable elements one can obtain only those cycles in the special fibres which are the restrictions of cycles in the generic fibre. In particular, if $Z_{\eta}$ is a motivic cycle and the boundary contains cycles which are {\em not} the restriction of cyces in the generic fibre then the motivic cycle is necessarily indecomposable. We will show that that is the case for our cycles.

An alternate way, as is done in Chen-Lewis \cite{chle}, Ma-Sato \cite{masa} or Sato \cite{sato}, is to compute the Beilinson regulator and show that, when evaluated against a particular transcendental $(1,1)$ form, this is non-zero.  In the case when the base is the ring of integers of a glabal field or a local field, the boundary map can be thought of as a non-Archimedean regulator map and the idea is essentially the same. The transcendental form is represented by a new algebraic cycle in the special fibre.  Evaluation against this form corresponds to intersecting with the new cycle.  That being non-zero implies that the component of the boundary in the direction of the new cycle is non-zero. This also fits in to the philosophy espoused by Manin \cite{mani} that the Archimedean component of an arithmetic variety can be viewed as the `special fibre at $\infty$'.

	\section{Construction of the cycles on the generic $K3$ surface of degree $2$.}

	We use Proposition \ref{rcconstruction}  to construct infinitely many distinct cycles in the generic  $K3$ surface of degree $2$. Recall that if $X$ is a $K3$ surface whose moduli point $s$ lies on $\KP_{Q}$, there is a rational curve $Q$ meeting $S_X$ at $3d$ double points. Assuming this moduli is non-empty, we construct a motivic cycle defined in a Zariski open set in the complement of this space. 
	
	To apply  Proposition \ref{rcconstruction} we need a suitable rational curve in the generic $K3$ surface. The idea is to `deform' the curve $Q$ which exists on the fibres over $\KP_Q$ to a rational curve which exists everywhere but meets the sextic at fewer points. 
	
	\subsection{A Theorem in Enumerative Geometry}

	There is a well known theorem in enumerative geometry \cite{zing} which states the following:
	
	\begin{thm} Let $N_d$ denote the number of rational curves of degree $d$ passing through $3d-1$ points in $\CP^2$ in general position. Then $N_d$ is  {\em finite} and {\em non-zero}.
		
	\end{thm}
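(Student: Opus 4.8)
The plan is to realize $N_d$ as the degree of an evaluation map from a moduli space of stable maps, and to settle finiteness and non-vanishing separately. First I would introduce the Kontsevich moduli space $\overline{M}_{0,n}(\CP^2,d)$ of genus $0$ stable maps of degree $d$ carrying $n$ marked points. Because $\CP^2$ is a homogeneous space, for any map $f:\CP^1\to\CP^2$ of degree $d$ the pullback $f^{*}T\CP^2$ is a direct sum of line bundles of non-negative degree on $\CP^1$, so $H^1(\CP^1,f^{*}T\CP^2)=0$; hence the moduli problem is unobstructed and $\overline{M}_{0,n}(\CP^2,d)$ is a smooth proper Deligne--Mumford stack of the expected dimension $3d-1+n$.

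Taking $n=3d-1$ and assembling the marked-point evaluations gives a morphism
$$\operatorname{ev}=(\operatorname{ev}_1,\dots,\operatorname{ev}_{3d-1}):\overline{M}_{0,3d-1}(\CP^2,d)\longrightarrow(\CP^2)^{3d-1}$$
between proper spaces of the same dimension $6d-2$. The number $N_d$ is then the number of points in the fibre over a general tuple $(p_1,\dots,p_{3d-1})$, read off as the images of the corresponding stable maps. For finiteness I would argue that, since source and target are proper of equal dimension, once the map is dominant its general fibre is necessarily finite; moreover I would check that over a general point none of the stable maps in the fibre have contracted components or are multiple covers, so that each represents a genuine irreducible rational curve of degree $d$ through the $3d-1$ points. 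Ruling out these boundary contributions is where most of the care is required.

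The main obstacle is non-vanishing: one must exhibit at least one such curve. The cleanest route is a deformation argument -- produce a single degree-$d$ map $f:\CP^1\to\CP^2$ that is birational onto its image, observe that $f^{*}T\CP^2=\OO(a)\oplus\OO(b)$ with $a,b>0$, so that $H^1$ vanishes and the differential of $\operatorname{ev}$ is surjective at $[f]$; this forces $\operatorname{ev}$ to be dominant and hence $N_d\neq 0$. Alternatively one can invoke Kontsevich's recursion, which expresses $N_d$ through the lower invariants with base case $N_1=1$ (the unique line through two points); since the recursion is genuinely enumerative its contributions are non-negative, and positivity of the base case then propagates to give $N_d>0$ for all $d$.
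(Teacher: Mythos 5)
The paper itself does not prove this theorem: it simply cites McDuff--Salamon (page 213), so you are supplying an argument the paper outsources, and your stable-maps framework ($\overline{M}_{0,3d-1}(\CP^2,d)$ smooth proper of dimension $6d-2$ by convexity of $\CP^2$, evaluation map, Kleiman-type transversality to rule out boundary and multiple-cover contributions over a general point) is the standard one and matches the cited source in spirit. The finiteness half is fine modulo the boundary analysis you correctly flag. However, both of your routes to non-vanishing have genuine gaps as written. For the deformation argument: $H^1(\CP^1,f^{*}T\CP^2)=0$ gives unobstructedness of the moduli space, but it does \emph{not} imply that the differential of $\operatorname{ev}$ is surjective at $[f]$ once you impose $3d-1$ point conditions. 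The relevant vanishing for surjectivity of $d\operatorname{ev}$ is not on $f^{*}T\CP^2$ but on the normal sheaf: writing $D=x_1+\cdots+x_{3d-1}$, note that $f^{*}T\CP^2(-D)$ has total degree $3d-2(3d-1)<0$ with a summand of degree $<-1$ for $d\geq 2$, so $H^1(f^{*}T\CP^2(-D))\neq 0$; the correct statement is that tangent directions at the $x_i$ are covered by moving the marked points along the curve, while normal directions are covered because $H^0(N_f)\to\bigoplus_i N_f|_{x_i}$ is surjective, which follows from $H^1(N_f(-D))=0$ since for an immersed birational $f$ (e.g.\ a generic projection of the rational normal curve of degree $d$) one has $N_f\cong\OO_{\CP^1}(3d-2)$, so $N_f(-D)$ has degree $-1$. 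This repair is easy but it is a different computation from the one you propose, and you must also exhibit the immersed birational $f$, which your sketch does not.

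Your alternative route via Kontsevich's recursion is actually false as stated. The recursion is
$$N_d=\sum_{\substack{d_1+d_2=d\\ d_1,d_2>0}}N_{d_1}N_{d_2}\left(d_1^2d_2^2\binom{3d-4}{3d_1-2}-d_1^3d_2\binom{3d-4}{3d_1-1}\right),$$
and the individual summands are \emph{not} non-negative: for $d=4$ and $(d_1,d_2)=(1,3)$ the bracket is $9\cdot 8-3\cdot 28=-12$, so the term contributes $-144$. Positivity therefore does not propagate termwise from $N_1=1$, and the appeal to the recursion being ``genuinely enumerative'' is circular here, since enumerativity (Gromov--Witten number equals actual curve count) is exactly what the transversality argument must establish before the invariant is known to be a non-negative count, and non-negativity would in any case not give non-vanishing. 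So of your two proposed endgames, one needs the normal-bundle correction above and the other does not work; with the $N_f$ repair made, your overall strategy is sound and is essentially the argument in the literature the paper points to.
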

	
	\begin{proof}  A proof of this can be found in the book by Dusa McDuff and Dieter Salamon \cite{mcsa}, page 213. \end{proof}
	
	The numbers $N_1=1$, $N_2=1$, $N_3=12$ are classical. The precise number $N_d$ is the celebrated theorem of Kontsevich-Manin \cite{koma} and Ruan-Tian \cite{ruti}. 
	
	Recall that $\KP_Q$ is the submoduli of the moduli of  $K3$ surfaces $X$ of degree $2$ where there is a rational curve $Q$ of degree $d$ which meets the sextic $S_X$ at points of even multiplicity. Since $Q$ is of degree $d$ it meets $S_X$ at $6d$ points. However, since they meet at either ordinary double points  of the sextic or points of even multiplicity, there are at most $3d$ distinct points of intersection. 
	
	If we discard one of the double points of $Q \cap S_X$, we determine $3d-1$ points. Hence the theorem above suggests that perhaps for {\em any} $S_X$ we can find a rational curve of degree $d$ passing through these $3d-1$ points. 
	
	In general it is not clear if  $\KP_Q$  is non-empty but we will give several examples when such submoduli exist. 
	
	We need some definitions. Let $\Sigma_6$ be the space of sextic curves in $\CP^2$ and let $\Sigma_6(n)$ be the subspace with $n$ nodes. A {\em singular type} $\Sigma_T$  \cite{yuzh}, Section 3, is an irreducible component of $\Sigma_6(n)$. A sextic in $\Sigma_T$ is said to be of type $T$. For $z \in \Sigma_T$ there are $n$ nodes $p_1(z),\dots,p_n(z)$ smoothly varying with $z$. 
	
	We have the following theorem, the proof of which was suggested to me by Tom Graber. The proof is easier as we assume that for some $X$ there exists a rational curve meeting $S_X$ at the correct number of points.  
	 
	 \begin{thm} \label{enumgeom}
	 	
	 	Consider pairs $(S,Q)$ where $S$ is a sextic in $\CP^2$  in $\Sigma_T$ of singular type $T$ with $n$ nodes and $Q$ is a rational curve of degree $d$. In general they will meet at $6d$ points. We make the following assumption: 
	 	
	 	{\bf Assumption}: For some $z_0$ in $\Sigma_T$, there exists a sextic $S_{z_0}$ of type $T$ and and a nodal rational curve $Q_{z_0}$ of degree $d$  such that $S_{z_0}$ and $Q_{z_0}$ meet at $ \leq 3d+1$   points of the following type 
	 	
	 	\begin{enumerate}
	 		\item $k$  nodes  of $S_{z_0}$, say $p_{i_1}(z_0),\dots,p_{i_k}(z_0)$. 
	 		\item $3d-1-k$ other points tangent to $S_{z_0}$ (which could coincide to be of even multiplicity). 
	 		\item $2$ other points (which could coincide and be of one  of the above types). 
	 	\end{enumerate}
	 	
	 	Then, for {\bf any} sextic $S_z$ corresponding to a point $z$ in $\Sigma_T$,
	 	there exists a nodal rational curves $Q_z$ of degree $d$ meeting the sextic at $3d-1$ 
	 	points of even multiplicity and two other points. There are two possibilities - either $3d-1-k$ points of tangency and passing through the $k$ nodes $p_{i_1}(z),\dots,p_{i_k}(z)$ or meeting the sextic at $3d-k$ points of tangency and passing thought $k-1$ nodes - and both exist.

	 \end{thm}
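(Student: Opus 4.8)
The plan is to realize the desired curves as fibres of a \emph{proper} incidence correspondence over $\Sigma_T$, and then to propagate the existence at the single point $z_0$ to the whole (irreducible) family by a dimension count combined with properness. First I would set up the incidence variety. Let $\overline{M}_{0,0}(\CP^2,d)$ be the Kontsevich space of genus-zero degree-$d$ stable maps to $\CP^2$; it is proper, of dimension $3d-1$, and its general member parametrises a nodal rational curve, so it contains the relevant component of the Severi variety of nodal rational curves. Over the base $\Sigma_T$ the $n$ nodes move in sections $p_1(z),\dots,p_n(z)$, so for a fixed choice of $k$ of them I would form
$$\mathcal{I}=\{(z,[f])\ :\ \mathrm{im}(f)\ni p_{i_1}(z),\dots,p_{i_k}(z)\ \text{ and }\ \mathrm{im}(f)\text{ is tangent to }S_z\text{ at }3d-1-k\text{ points}\},\quad [f]\in\overline{M}_{0,0}(\CP^2,d),$$
together with the projection $p:\mathcal{I}\to\Sigma_T$.

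Next I would carry out the dimension count. Passing through a fixed node is a single condition, so the $k$ node-incidences drop the dimension by $k$; tangency to $S_z$ at an unspecified smooth point is also one condition (the contact point is free, but imposing tangency removes one parameter), so the $3d-1-k$ tangencies impose $3d-1-k$ conditions. These are exactly $3d-1$ conditions on the $(3d-1)$-dimensional space of curves, so every component of $\mathcal{I}$ has dimension at least $\dim\Sigma_T$, with expected fibre dimension zero. By B\'ezout $Q\cdot S_z=6d$, and the $k$ nodes and $3d-1-k$ tangencies already account for $2k+2(3d-1-k)=6d-2$ of this; the two remaining units of intersection are precisely the two leftover points, generically two transverse intersections, matching the statement.

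Then I would propagate existence. The map $p$ is proper, since $\overline{M}_{0,0}(\CP^2,d)$ is proper and the incidence conditions are closed. By the Assumption the fibre over $z_0$ is non-empty, so some irreducible component $\mathcal{I}_0\subset\mathcal{I}$ meets $p^{-1}(z_0)$; as $\dim\mathcal{I}_0\ge\dim\Sigma_T$ and the generic fibre is zero-dimensional (finiteness, the easy half of these characteristic-number counts, with the tangency points playing the role of the point-conditions in the enumerative theorem above), the image $p(\mathcal{I}_0)$ is a closed irreducible subvariety of full dimension inside the irreducible $\Sigma_T$, hence equals $\Sigma_T$. Thus the fibre is non-empty over every $z$, yielding for each $S_z$ a degree-$d$ rational curve $Q_z$ meeting $S_z$ with even multiplicity at the prescribed $3d-1$ points plus two further points; restricting to the nodal-Severi component keeps the generic such $Q_z$ nodal, as Proposition \ref{rcconstruction} requires.

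Finally, for the clause that \emph{both} configuration types occur, I would run the same argument for two correspondences: $\mathcal{I}^{(a)}$ requiring $k$ nodes and $3d-1-k$ tangencies, and $\mathcal{I}^{(b)}$ requiring $k-1$ nodes and $3d-k$ tangencies. Both satisfy the identical count of $3d-1$ conditions, so the surjectivity argument applies to each once a non-empty fibre over $z_0$ is exhibited; here the freedom in the two extra points of the Assumption is used, since reading one of them as a tangency (respectively as an extra node-passage) seeds a point of $\mathcal{I}^{(b)}$ (respectively $\mathcal{I}^{(a)}$) from the same $Q_{z_0}$. I expect the main obstacle to be exactly this last step and its deformation-theoretic control: ensuring along the family that tangencies do not degenerate into ordinary contact or higher-order singular contact, that the chosen nodes persist, and that the two leftover points behave as claimed, so that the component $\mathcal{I}_0$ genuinely parametrises curves of the stated intersection type rather than a degenerate limit. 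This requires a local transversality/smoothing analysis of $\mathcal{I}$ near the point over $z_0$ (in the spirit of the deformation theory underlying the Kontsevich--Manin and Ruan--Tian counts), as opposed to the clean global dimension count that handles the rest.
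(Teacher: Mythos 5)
Your global strategy is the one the paper uses: form an incidence correspondence over $\Sigma_T$, show that the $3d-1$ closed conditions (the $k$ node-incidences and $3d-1-k$ tangencies) cut every component down to dimension at least $\dim \Sigma_T$, use the Assumption to get a point over $z_0$, and propagate over the irreducible base. The paper implements the dimension bound differently --- via the morphism $\pi(Q,S)=(Q\cap S)\setminus\{p_{i_1},\dots,p_{i_k}\}$ to the Hilbert scheme $\UH_{6d-2k}$, identifying $\XX$ with the preimage of the stratum $\bar{Z}$ of subschemes consisting of $3d-k-1$ double points and two reduced points, and using smoothness of $\UH_{6d-2k}$ to bound the codimension of every component of $\pi^{-1}(\bar{Z})$ by $3d-k-1$ --- but that is equivalent in content to your divisor-by-divisor count, and your reading of the ``both configurations exist'' clause via two correspondences seeded from the same $Q_{z_0}$ is a sensible gloss on a point the paper's proof does not spell out. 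The genuine divergence, and the gap, is in the propagation step.

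Your surjectivity argument is: properness of $\overline{M}_{0,0}(\CP^2,d)$ makes $p(\mathcal{I}_0)$ closed, and then $\dim \mathcal{I}_0 \geq \dim \Sigma_T$ together with \emph{generic zero-dimensionality of the fibres} forces $p(\mathcal{I}_0)=\Sigma_T$. That zero-dimensionality is load-bearing: without it, a closed irreducible image can perfectly well be a proper subvariety of $\Sigma_T$ with positive-dimensional fibres over it, and $\dim\mathcal{I}_0\geq\dim\Sigma_T$ alone yields nothing. It is also not ``the easy half'' here: in the stable-map space the closures of the tangency conditions are notorious for containing excess loci --- multiple covers, stable maps with components landing in $S_z$, tangency degenerating to passage through a singular point of $S_z$ --- which can be positive-dimensional, and $S_z$ is of special singular type $T$ (e.g.\ six lines), not a general sextic, so finiteness of these characteristic-number-type counts is exactly the hard part; indeed the paper explicitly leaves the expected fibre dimension $0$ as an \emph{expectation} in the Remark following the theorem, and structures its proof so as not to need it. The paper instead propagates non-emptiness by flatness/upper semicontinuity of local fibre dimension of $\XX\to\Sigma_T$: the fibre dimension $\geq 0$ at $(Q_{z_0},S_{z_0})$ spreads to a Zariski neighborhood, and then irreducibility of $\Sigma_T$ is invoked --- an argument whose only input at $z_0$ is non-emptiness. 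A second, related weakness, which to your credit you flag yourself as ``the main obstacle'': even granting surjectivity, properness only produces a point of $\mathcal{I}$ over each $z$, and that point may be one of the degenerate stable maps above rather than an honest nodal degree-$d$ rational curve with the stated contact type, which is what the theorem asserts and what Proposition \ref{rcconstruction} consumes. So as written your proof has two unplugged holes (generic finiteness, and exclusion of degenerate limits over special $z$), the first of which the paper's semicontinuity route is specifically designed to avoid.
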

	 
	 \begin{proof}  
	 	
	 	Let $\M_d$ be the space of degree $d$ nodal rational curves in $\CP^2$ and let $\Sigma_T$ be the space of sextic curves of singular type $T$.  It is known that the dimension $\dim \M_d=3d-1$. Let $\M=\M_d(k)$ be the subspace of $\M_d$ of rational curves passing through $k$ points in general position. This is of dimension $3d-k-1$.  
	 	
	 	Let $\XX$ be the locus in $\M\times \Sigma_T$ parametrizing pairs $(Q_z,S_z)$ where $Q_z$ meets $S_z$ at $(3d-1)$ double points of which  $k$ are the nodes $p_{i_1}(z),\dots,p_{i_k}(z)$ of $S_z$ and two other points. $\XX$ is non-empty as by assumption $(S_{z_0},Q_{z_0})$ corresponds to a point on $\XX$. There is a morphism 
	 	$$\pi:\M \times \Sigma_T \longrightarrow \UH_{6d-2k}$$ 
	 	$$\pi(Q,S) \longrightarrow (Q \cap S)\backslash \{p_{i_1},\dots, p_{i_k}\}$$
	 	where $\UH_{6d-2k}$ is the Hilbert scheme of $6d-2k$ points in $\CP^2$. 
	 	
	 	Let $Z$ be the stratum of $\UH_{6d-2k}$ parametrizing subschemes of $3d-k-1$ double points $\cup$ $2$ reduced points. Then $\XX$ is just $\pi^{-1}(Z)$. The closure $\bar{Z}$ is an irreducible codimension $(3d-k-1)$ subscheme of $\UH_{6d-2k}$ as it is determined by choosing $3d-k+1$ points - which is $6d-2k+2$ dimensional - and then $3d-k-1$ tangents at $3d-1-k$ of those points - which makes it $9d-2k+1$ dimensional. Further $\UH_{6d-2k}$ is $12d-4k$ dimensional. Since $\UH_{6d-2k}$ is smooth   every irreducible component of $\pi^{-1}(\bar{Z})$ has codimension at most $(3d-k-1)$. 
	 	
	 	From our assumption we know that for some $z_0$ in $\Sigma_T$ there is a pair $(Q_{z_0},S_{z_0})$ in $\pi^{-1}(\bar{Z})$. Equivalently, the local dimension of $\XX \cap (\M \times \{S_{z_0}\})$ is $\geq 0$. Looking at $\pi$ near $(Q_{z_0},S_{z_0})$, since $\dim(\XX) \geq \dim (\Sigma_T)$, the map $\pi:\XX \rightarrow Z$ is flat (\cite{hart},Chapter III, Theorem 9.9) and  the fact that the local fibre dimension is $\geq 0$ at $(Q_{z_0},S_{z_0})$ implies all nearby fibre dimensions are $\geq 0$ . This is equivalent to the fact that there is such a  pair $(Q_z,S_z)$ for every $z$ in a Zariski open neighborhood of $z_0$. Since $\Sigma_T$ is irreducible, this holds for all pairs $(Q_z,S_z)$ with $z$ in $\Sigma_T$.

	 \end{proof}
	 
	 \begin{rem} In fact, we expect that the dimension is $0$ - namely there are a finite non-zero number of rational curves $Q$ for each $S$ of type $T$ when there is a special rational curve of the same type. There are instances when the special rational curve cannot exist - for instance, one cannot have a nodal cubic meeting six lines only at points of tangency.
	 \end{rem}

	 In general it is not clear if there exists pairs $(S_{t_0},Q_{t_0})$ of the type we require, though in several special cases they are known to exist. For instance if $k=0$ and $d=2$, so generically $S$ is  a smooth sextic  and $Q$ is a conic. We know that for the Kummer surface of a principally polarised Abelian surface there is a conic tangent to the sextic (which in this case is the union of six lines) at the six lines. In particular, at five lines. Hence from the theorem above, it says that in a Zariski neigbourhood of the moduli point of the Kummer surface  of an Abelian surface there exists a conic tangent to a sextic at five points. In fact, it is a theorem of Gathmann \cite{gath} (Corollary 3.6) that there are  $70956$ conics tangent to a sextic at five points
	 
	  If $S$ has nodes, for instance if $S$ is the union of six lines and has $15$ nodes, then the theorems of Birkenhake and Lange (\cite{biwi}, Theorems 7.1-7.4) show that  for  $\Delta$ of the form $2d^2-2k+7$ or $2d^2-2k+8$ there exists rational curves of degree $d$ passing through $k$ nodes and meeting the remaining lines at points of even multiplicity. In particular, there exists a rational curve of the type required passing through $k$ nodes and $3d-1-k$ points of even multiplicity.

	 If $S$ has $n$-nodes with $n\leq 8$ then  Yu and Zheng \cite{yuzh} show that the desingularization of the  double cover of $\CP^2$ branched at  $S$ in $\Sigma_T$ is a del Pezzo surface. So this argument extends to certain families of $K3$ double covers of del Pezzo surfaces.

	 A related case when there is a similar theorem is $\CP^1 \times \CP^1$. Here minimal desingularization of the double cover of $\CPP$ ramified at  a curve of degree $(4,4)$ is also a $K3$ surface. Here the analogous statement would be that if there existed a rational curve $Q$ of degree $(1,n)$ or $(n,1)$ meeting the curve at $2n+1$ double points then in a Zariski open set there exists such a curve.

	  In the special case that the curve is consists of two pairs of four parallel lines, the double cover is the Kummer surface of a product of elliptic curves, say $E_1$ and $E_2$. If $E_1$ and $E_2$ are isogenous by an isogeny of degree $n$ then image of the graph of the isogeny provides a example of the special curve \cite{sree-split}.

	 \subsection{Motivic cycles}

	 \subsubsection{Construction}

	  We now use Theorem \ref{enumgeom} to construct a motivic cycle.

	 \begin{const} Let $\KP_{Q}$ as above be the moduli of degree $2$ $K3$ surfaces $X$ for which there is a rational curve $Q$ of degree $d$ meeting the associated sextic $S_X$ at $3d$ points of even multiplicity. Let $T$ be a singular type and assume $\KP_Q \cap \Sigma_T$ is non-empty and that for $z_0$ in this intersection, $Q_{z_0}$ meets $S_{z_0}$ at $k$ of its nodes.   Then there exists a motivic cycle 
	 	$$Z_{Q_{z}} \in H^3_{\M}(X_z,\Q(2))$$
	 where $X_z$ is  the  $K3$ surface of degree $2$ corresponding to a $z$ in $V=\Sigma_T \backslash \KP_Q$.  
	 
	 This construction can be extended to get a cycle $Z_U$ defined in the motivic cohomology over a Zariski open set  $U$ lying over $V$ in an \'{e}tale cover of $\Sigma_T$. 
	 	\label{construction}
	 	\end{const}
	 	
	 \begin{proof} 
	 	Let $X_z$ denote the $K3$ surface corresponding to a point $z$. Let $S_z=S_{X_z}$ be the associated sextic. Let $\BX_z$ denote the surface which is obtained by blowing down the exceptional cycles of $X_z$ lying over the nodes of $S_z$ - so $\BX_z$ is a double cover of $\CP^2$ ramified at $S_z$. 
	 	
	 	If $z_0$ lies on $\KP_Q \cap \Sigma_T$, the curve $Q_{z_0}$ determines $3d$ points $Q \cap S_{z_0}$. From  Theorem \ref{enumgeom} above, for any $z \in \Sigma_T$, 
	 	there exists a rational curve $Q_z$ which meets $S_z$ at $3d-1$ points of the type determined by $Q_{z_0} \cap S_{z_0}$. $Q_z$ will meet $S_z$ at two other points $s_z$ and $t_z$ which are {\em not} points of even multiplicity. 
	 	
	 	Let $\pi:C_z \longrightarrow Q_z$ be the double cover of $Q_z$ in $\BX_z$. The normalization of $C_z$ is a double cover of $\CP^1$ ramified at two points - namely the points $s_z$ and $t_z$ and so is an {\em irreducible rational curve}. The curve $C_z$  has nodes at the points $C_z\cap S_z$ and  there are $3d-1$ such nodes. 
	 	
	 	Let $P_z$ be one of the nodes. If it is a node of $S_z$ as well then from Proposition \ref{rcconstruction}, using the function $f_{P_z}$ on the strict transform $\TC_z$ of $C_z$ along with the function $g_{P_z}$ on the exceptional fibre  $E_{P_z}$, we obtain a cycle 
	 			$$Z_{Q_z}=(\TC_{P_z},f_{P_z})+ (E_{P_z},g_{P_z}) \in H^3_{\M}(\BX_z,\Q(2))$$
	 				
	    If $P_z$ is not a node of $S_z$ then let $b:X_{P_z} \longrightarrow X_z$ be the blow up of $X_z$ at $P_z$. We can construct a function on the strict transform of $C_z$ in $X_{P_z}$ as above to obtain a cycle in $ H^3_{\M}(X_{P_z},\Q(2))$ and push that down using $b_*$ to obtain a cycle $Z_{Q_z} \in H^3_{\M}(X_z,\Q(2))$.
	    
	    We further determine $Z_{Q_z}$  by requiring the function $f_{P_z}$ satisfy
	    $$f_{P_z}(s_z)=1$$ 

	 	Monodromy around $\KP_Q$ might take one of the points in the strict transform of $C_z$  lying over $P_z$ to the other, so the function $f$ may not be defined over the generic fibre. To rectify this, we have to go to an \'{e}tale cover of the moduli space where all the sections are defined. This results in a cycle $Z_{Q_{U}}$ define in the fibres over a Zariski open set $U$ which meets $\KP_{Q}$ and one has sections $P_{U,1}$ and $P_{U,2}$ defined over $U$ which map to a node $P_U$. 
	 	
	\end{proof}

	For instance, consider the case of the conics meeting the sextic tangentially.  As remarked above, Gathmann \cite{gath} showed  there always exists a conic meeting the sextic tangentially at five points. This conic meets the sextic at two other points.  The normalisation of the double cover of this conic ramified at all the points of intersection is a double cover of $\CP^1$ ramified at two points - namely $t_z$ and $s_z$ - hence is an irreducible smooth rational curve. Using this we can construct a motivic cycle on the complement of the moduli of $K3$ surfaces where the conic meets the sextic at six points of tangency. 
	
	However, we do not have to use the theorem of Gathmann. We can instead use the fact that in the $\CP^2$  corresponding to the Kummer surface of a simple principally polarized Abelian surface there is a conic meeting the six lines tangentially. Hence at least in a Zariski open neighbourhood of Kummer surfaces of Abelian surfaces there exists a conic meeting the sextic tangentially at $5$ points. 
	
	In the special case when the sextic degenerates to a product of six lines - so the moduli space is $4$ dimensional. The $K3$ surface is obtained by blowing up the nodes of the double cover of $\CP^2$ ramified at these $6$ lines. There are $15$ nodes. The generic Picard number of the degree $2$ $K3$ surface $X$ corresponding to this type of a sextic is $16$. $15$ of those come from the exceptional fibres and one more coming from the hyperplane section. 
	
	The conic above will be tangent to $5$ of the six lines and will meet the remaining line at $2$ points. If these points coincide then the conic is tangent to the six lines and then one knows that corresponding $K3$ surface is the Kummer surface of an Abelian surface. And further, the Picard number is $17$ - one more cycle comes from a component of the double cover of the conic. 
	
	This is an example of the above situation - on the $4$ dimensional moduli of double covers of degenerate sextics, there is a motivic cycle supported in the fibres over the complement of the moduli of Kummer surfaces.  This is distinct from the cycles constructed in \cite{sree-simple} or \cite{masa}.

	In the construction we have made some choices. We chose a point to discard to obtain $3d-1$ points and we also chose the node $P_{U}$. A different choice of point to discard results in a different  rational curve $Q_{U}$ but the support of the boundary is the same. Another choice of $P_{U}$  will result in a different function $f_{P_{U}}$.  In the next section we  show that  all these choices will result in cycles with the same boundary. Interestingly, Sato \cite{sato} shows that in the case of products of elliptic curves,  the Archimedean regulator {\em does} depend on the choice of $P_{U}$. 
	
	As things stand, we do not know that the cycle is non zero. Our computation of the boundary will show that the cycle is generically indecomposable. In particular, that will show that it is generically non-trivial.

\subsubsection{Indecomposability}

 We now claim that $Z_{Q_U}$ in $H^3_{\M}(X_U,\Q(2))$ is in fact an indecomposable cycle.
 
 Recall that over $\KP_{Q}$ the conic $C_U$ gains a new node $R_Q$ since $s_U$ and $t_U$ coincide. As a result it breaks up in to two components. 
 
 There are cycles $\TC_{\KP_Q,1}$ and $\TC_{\KP_Q,2}$ in the universal $K3$ surface over $\KP_Q$ such that
 $$\TCC|_{\KP_Q}=\TC_{\KP_Q,1}+\TC_{\KP_Q,2}+ E_{R_Q}$$
 where $\TCC$ denotes the closure of $\TC_U$ in the universal $K3$ surface over the moduli space (or its blow up at $P_{U}$) and $E_{R_Q}$ the exceptional fibre over the node $R_Q$.

As explained in Section \ref{localization}, if a cycle is decomposable, then the boundary under the connecting homomorphism of the localization sequence is the restriction of a  generic cycle. If we show that the boundary is {\em not} the restriction of a generic cycle then it is is necessarily indecomposable.

	\begin{thm} The boundary of the cycle $Z_{Q_{U}}$ is supported in the fibres over  $\KP_Q$ and 
		$$\partial(Z_{Q_U})=\TC_{\KP_Q,1}-\TC_{\KP_Q,2}.$$
		The cycle $\TC_{\KP_Q,1}-\TC_{\KP_Q,2}$ is not the restriction of a cycle in the generic fibre  and hence the cycle $Z_{Q_U}$ is {\em indecomposable}
		
		\end{thm}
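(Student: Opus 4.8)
The plan is to compute the boundary $\partial(Z_{Q_U})$ directly from the localization sequence and then verify that the resulting cycle $\TC_{\KP_Q,1}-\TC_{\KP_Q,2}$ in the special fibre cannot arise as the restriction of any cycle defined over the generic fibre. The whole argument splits into two logically independent parts: an explicit divisor computation (to pin down the boundary), and a structural argument about the monodromy/irreducibility of the generic fibre (to rule out decomposability).

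\medskip

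\emph{Step 1: Compute the boundary.} Recall $Z_{Q_U}=(\TC_{P_U},f_{P_U})+(E_{P_U},g_{P_U})$, where $\div(f_{P_U})=P_{U,1}-P_{U,2}$ on $\TC_U$ and $\div(g_{P_U})=P_{U,2}-P_{U,1}$ on $E_{P_U}$. By the formula in Section \ref{localization}, $\partial(Z_{Q_U})=\div_{\TCC}(f_{P_U})+\div_{\EE}(g_{P_U})$, where $\TCC$ and $\EE$ denote the closures of $\TC_U$ and $E_{P_U}$ in the family over $\Sigma_T$. Over the generic point the divisors of $f_{P_U}$ and $g_{P_U}$ are supported at the sections $P_{U,1},P_{U,2}$ and cancel (the cocycle condition), so only vertical components over $\KP_Q$ survive. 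The key geometric input is that as $z$ approaches $\KP_Q$, the points $s_z$ and $t_z$ collide, $\TC_U$ acquires the node $R_Q$, and the fibre degenerates as $\TCC|_{\KP_Q}=\TC_{\KP_Q,1}+\TC_{\KP_Q,2}+E_{R_Q}$. I would track how the two points $P_{U,1},P_{U,2}$ (between which $f_{P_U}$ has its zero and pole) specialize onto the two components $\TC_{\KP_Q,1},\TC_{\KP_Q,2}$, together with the normalization $f_{P_U}(s_z)=1$, to read off the multiplicities. The normalization condition is exactly what breaks the symmetry and forces the boundary to be $\TC_{\KP_Q,1}-\TC_{\KP_Q,2}$ rather than a trivial sum; this computation is essentially the same as in \cite{sree-simple} for the Kummer case, so I would adapt that bookkeeping.

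\medskip

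\emph{Step 2: Rule out decomposability.} By the discussion in Section \ref{localization}, the boundary of a decomposable cycle $(C_\eta,a_\eta)$ equals $\sum_s \ord_s(a_\eta)\,C_s$, i.e. it is always the restriction to the special fibre of (a multiple of) the closure of a single generic curve $C_\eta$. Thus it suffices to show $\TC_{\KP_Q,1}-\TC_{\KP_Q,2}$ is not such a restriction. The crucial point is that $\TC_{\KP_Q,1}$ and $\TC_{\KP_Q,2}$ are interchanged by the monodromy action around $\KP_Q$ (equivalently, by the covering involution $\iota$ on the conic): the generic fibre carries only the \emph{irreducible} curve $\TC_U$, whose closure restricts over $\KP_Q$ to the \emph{sum} $\TC_{\KP_Q,1}+\TC_{\KP_Q,2}+E_{R_Q}$, never to the \emph{difference}. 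Any generic cycle is monodromy-invariant, so its restriction must be invariant under the swap $\TC_{\KP_Q,1}\leftrightarrow\TC_{\KP_Q,2}$, whereas the difference is anti-invariant and nonzero. This contradiction gives indecomposability.

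\medskip

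\emph{The hard part} will be Step 1, and specifically making rigorous the claim that the two sections $P_{U,1},P_{U,2}$ limit onto distinct components so that the boundary is genuinely the difference (with the correct signs) and not zero. One must control the relative position of the node $P_U$ and the vanishing node $R_Q$ in the degeneration, and ensure the étale base change of Construction \ref{construction} genuinely separates the two branches so that $f_{P_U}$ is single-valued; the normalization $f_{P_U}(s_z)=1$ must be propagated through the limit. I would handle this by working on the blow-up $\TX$, where $\TC_U$, $E_{P_U}$, and (in the limit) $E_{R_Q}$ are all smooth rational curves meeting transversally, and compute $\ord$ of $f_{P_U}$ along each vertical component using the local equations of the degeneration. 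Once the signs are confirmed, the anti-invariance argument of Step 2 is robust and finishes the proof.
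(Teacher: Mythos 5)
Your overall architecture matches the paper's proof --- compute the boundary via the localization sequence, then argue that $\TC_{\KP_Q,1}-\TC_{\KP_Q,2}$ is not the restriction of a generic class --- and your Step 2 is essentially right, with one caveat: the \'{e}tale base change in Construction \ref{construction} is taken precisely to kill the monodromy that swaps $P_{U,1}$ and $P_{U,2}$, so ``monodromy-invariance of generic cycles'' is delicate after that base change. The robust form of your argument is the one you mention parenthetically: the covering involution $\iota$ acts trivially on the generic N\'{e}ron--Severi group (generated by $\LL$ and the exceptional curves over the nodes of the sextic, all $\iota$-stable), while it interchanges $\TC_{\KP_Q,1}$ and $\TC_{\KP_Q,2}$; since the difference is $\iota$-anti-invariant and nonzero, it cannot be a restriction.

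The genuine gap is in Step 1, and it is exactly the part you flag as hard. The multiplicities in $\div_{\TCC_Q}(f_{P_U})={\mathcal H}+a_1\TC_{\KP_Q,1}+a_2\TC_{\KP_Q,2}+a_3E_{R_Q}$ are orders of vanishing of the extended function along vertical divisors of the total space; they are \emph{not} determined by tracking where the sections $P_{U,1},P_{U,2},s_U$ specialize, and there are no ``local equations of the degeneration'' for $f_{P_U}$ to compute with, since the function is defined only abstractly, fibrewise, by its divisor together with the normalization $f_{P_U}(s_U)=1$. The device the paper uses --- which you already have in hand but deploy only in Step 2 --- is to apply $\iota$ to the function itself: fibrewise $\div(f^{\iota}_{P_U})=-\div(f_{P_U})$, so $f^{\iota}_{P_U}=c_U/f_{P_U}$ with $c_U$ a function on the base, and $c_U\equiv 1$ because $s_U$ is $\iota$-fixed and $f_{P_U}(s_U)=1$; comparing $\div_{\TCC_Q}(f^{\iota}_{P_U})=\iota\bigl(\div_{\TCC_Q}(f_{P_U})\bigr)$ with $-\div_{\TCC_Q}(f_{P_U})$ then forces $a_1=-a_2$ and $a_3=0$. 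Note in particular that your specialization bookkeeping gives you no handle on $a_3$, i.e.\ on ruling out a component along $E_{R_Q}$. Two degree counts remain, neither of which your plan supplies: first, if $a=a_1$ were $0$, then $f_{P_U}$ would restrict to a function on $\TC_{\KP_Q,1}$ with divisor ${\mathcal H}\cdot\TC_{\KP_Q,1}=P_{\KP_Q,1}$ of degree $1$, a contradiction, so $a\neq 0$; second, to pin $a=1$ (as the statement asserts, with no ambient constant) one corrects by the decomposable cycle $(\TC_U,h)$, where $h$ is a base function with $\div(h)=\KP_Q-\sum a_D D$ and $\Supp(D)\cap\KP_Q=\emptyset$, and runs the same degree-zero count for $h^{-a}f_{P_U}$ restricted to $\TC_{\KP_Q,1}$, using $({\mathcal H},\TC_{\KP_Q,1})=(E_{R_Q},\TC_{\KP_Q,1})=1$ and $(\TC_{\KP_Q,2},\TC_{\KP_Q,1})=0$ to get $0=1-a$. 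Without these steps your argument shows at best that the boundary is proportional to $\TC_{\KP_Q,1}-\TC_{\KP_Q,2}$, not that the coefficient is nonzero, let alone equal to $1$.
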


	\begin{proof}

	 Let $Z_{Q_U}=(\TC_U,f_{P_U})+(E_{P_U},g_{P_U})$ be the cycle in the generic fibre constructed in Theorem \ref{construction}. 
	$$\div(f_{P_U})=P_{U,1}-P_{U,2} $$
	where $P_{U,1}$ and $P_{U,2}$ are the two points lying over $P_U$.  We have further assumed that $f_{P_U}(s_U)=1$, where $s_U$ is one of the non-singular ramified points.

	 	 The double cover $\pi:X_U \rightarrow \CP^2_{K_U}$ induces an involution $\iota$ on $C_U$ which fixes the ramified points.  This involution lifts to the normalization  $\TC_U$. If $P_{U,1}$ and $P_{U,2}$ are two points lying over a singular ramified point $P_U$, then $\iota(P_{U,1})=P_{U,2}$. Further $\iota$ stabilizes the exceptional fibre: $\iota (E_{P_U})=E_{P_U}$ for any node $P_U$. 
	 	 
	 	 Let $f^{\iota}=f \circ \iota$. Then $f^{\iota}_{P_U}$ has divisor 
	 	 $$\div(f^{\iota}_{P_U})=P_{U,2}-P_{U,1}=-\div(f_{P_U})$$
	 	 hence $$f^{\iota}_{P_U}=\frac{c_U}{f_{P_U}}$$ for some constant (function on the base) $c_U$. Since we have further assumed that $f(s_U)=1$ and $\iota(s_U)=s_U$ we have $c_U \equiv 1$.

	 	 The closure  $\CC_Q$ of $\TC_U$ has two components over $\KP_Q$,  $\TC_{\KP_Q,1}$ and $\TC_{\KP_Q,2}$  and the involution  $\iota$ interchanges them. The closure of one of the points lying over $P_{U}$ will lie on $\TC_{\KP_Q,1}$ and the other on $\TC_{\KP_Q,2}$. Let $P_{\KP_Q,i}$ denote the closures of $P_{U,i}$. We can assume $P_{\KP_Q,i}$  lies on $\TC_{\KP_Q,i}$. 
	 	 
	 	 The closure of points $s_U$ and $t_U$ meet over $\KP_Q$ at a point $R_Q$ hence $\CC_Q$ has a node at $R_Q$. Let $\TCC_Q$ be the blow up of $\CC_Q$ at $R_Q$. This has fibre 
	 	 $$\TC_{\KP_Q,1}+\TC_{\KP_Q,2}+E_{R_Q}$$ 
	 	 over $\KP_Q$.  The involution stabilizes $E_{R_Q}$ as well. 
	 	 
	 	 The boundary map in the localization sequence is 
	 	 $$\partial(Z_{Q_U})=\div_{\TCC_Q}(f_{P_U})+\div_{E_{P_U}}(g_{P_U}).$$
	 	 The first term is 
	 	 $$\div_{\TCC_Q}(f_{P_U})=\UH + a_1\TC_{\KP_Q,1}+ a_2\TC_{\KP_Q,2} + a_3 E_{R_Q}$$
	 	 where $\UH$ is the closure of the `horizontal' divisor $\div(f_{P_U})$. We then recall that 
		 $$\div_{\TCC_Q}(f^{\iota}_{P_U})=\iota ( \div(f_{P_U})) \text{ and } f^{\iota}_{P_U}=\frac{1}{f_{P_U}}$$
		 Therefore, on one hand 
		 $$\div_{\TCC_Q}(f^{\iota}_{P_U})=\iota(\UH+ a_1\TC_{\KP_Q,1}+ a_2\TC_{\KP_Q,2} + a_3 E_{R_Q})=-{\mathcal H}+ a_1\TC_{\KP_Q,2}+ a_2\TC_{\KP_Q,1} + a_3 E_{R_Q}$$
		 and on the other 
		 $$\div_{\TCC_Q}(f^{\iota}_{P_U})=-\div_{\overline{\TC}_{P_U}}(f_{P_U})=-{\mathcal H}-a_1\TC_{\KP_Q,1}- a_2\TC_{\KP_Q,2}-a_3 E_{R_Q}.$$
		 This implies $a_1=-a_2$ and $a_3=-a_3$. Hence $a_3=0$. Let $a=a_1$. We have   
		 $$\div_{\TCC_Q}(f_{P_U})={\mathcal H}+ a(\TC_{\KP_Q,1} - \TC_{\KP_Q,2}).$$
		 To show $a \neq 0$ we suppose $a=0$. Then 
		 $$\div_{\TCC_Q}(f_{P_U})={\mathcal H}$$
		 Since $\div_{\TCC_Q} (f_{P_U})$ does not contain $\TC_{\KP_Q,1}$ it restricts to a function on $\TC_{\KP_Q,1}$ and so 
		 $$\deg(\div_{\TC_{\KP_Q,1}} (f_{P_U})|_{\TC_{\KP_Q,1}})=0.$$ 
		 However,  
		 $$\div_{\TC_{\KP_Q,1}} (f_{P_U})|_{\TC_{\KP_Q,1}})= {\mathcal H}.\TC_{\KP_Q,1}=P_{1,\KP_Q}$$
		 which  has degree $1$. This is a contradiction, hence $a \neq 0$.  
		 
		 Finally, $\div_{E_P}(g_P)=-{\mathcal H}$ as $E_{\KP_Q}$ remains irreducible when $z$ lies on $\KP_Q$. Hence 
		 $$\partial(Z_{Q_U})=a\left(\TC_{\KP_Q,1}- \TC_{\KP_Q,2}\right).$$
		 for some $a\neq 0$. In particular, since $\TC_{\KP_Q,1}- \TC_{\KP_Q,2}$ is not the restriction of a cycle in the generic fibre, the cycle $Z_Q$ is indecomposable.

To compute $a$ we do the following. Let $h$ be a function on the base with divisor $\div(h)=\KP_{Q}-\sum a_D D$ and such that $Supp(D) \cap \KP_Q=\emptyset$. Such a function exists as $H^1$ of the moduli spaces is $0$ so homological and rational equivalence are the same. Let $(\TC_U,h)$ be the decomposable cycle in $H^3_{\M}(X_U,\Q(2))$. Then 
$$\partial ((\TC_U,h))=\TC_{\KP_Q,1}+ \TC_{\KP_Q,2}+ E_{R_Q} - \sum a_D \TCC_{Q,D}$$
where $\TCC_{Q,D}$ is the $\TCC_Q|_D$. These fibres are irreducible and have no intersection with cycles in the fibres over $\KP_Q$. 

The function $h^{-a}f_{P_U}$ has divisor
$$\div_{\TCC_Q}(h^{-a}f_{P_U})=\UH + a\left(\TC_{\KP_Q,1}- \TC_{\KP_Q,2}\right) - a\left(\TC_{\KP_Q,1} +  \TC_{\KP_Q,2} + E_{R_Q} - \sum a_D \TCC_{Q,D}\right) $$
$$=\UH -2a \TC_{\KP_Q,2}-a E_{R_Q} + a \sum a_D \TCC_{Q,D} $$
so does not have $\TC_{\KP_Q,1}$ in its support. We can therefore restrict the function $h^{-a}f_{P_U}$ to it and 
$$\deg(\div(h^{-a}f_{P_U})|_{\TC_{\KP_Q,1}})=0$$
On the other hand one has 
$$\deg(\div(h^{-a}f_{P_U})|_{\TC_{\KP_Q,1}})=(\div_{\TCC_Q}(h^{-a}f_{P_U}),\TC_{\KP_Q,1})$$
Since $(\UH,\TC_{\KP_Q,1})=1$, $(\TC_{\KP_Q,2},\TC_{\KP_Q,1})=0$, $(E_{R_Q},\TC_{\KP_Q,1})=1$ and $ (\TCC_{Q,D},\TC_{\KP_Q,1})=0$ for all $D$, this implies  
$$0=1-a$$
Therefore $a=1$. 

\end{proof}

	\section{Applications}
	
	There are some immediate consequences of the existence of such indecomposable motivic cycles. In the case of the base being a local ring, Spiess \cite{spie} shows that it has some consequences for torsion in codimension $2$. Here we give a rough outline of possible other applications. 
	
	\subsection{Mumford's conjecture.} Mumford conjectured that there are infinitely many rational cures on a $K3$ surface. Mori and Mukai \cite{momu} showed this in the complex case  and Bogomolov, Hassett and Tschinkel \cite{BHT} showed this in the case of mixed characteristics using a delicate deformation argument deforming rational curves in special fibres to the generic fibre. 
	
	A byproduct of the construction above is that there are infinitely many rational curves on the general degree $2$ $K3$ surface.  The motivic cycles we have are made up of rational curves. Since the motivic cycles have boundaries on distinct moduli, the corresponding rational curves are distinct.

	\subsection{Hodge-$\D$-conjecture for $K3$ surfaces over a local field}  
	
	Let $X$ be a variety over a local field $K$ with residue field $k$. Let $X_k$ denote the special fibre of a regular proper model $\XX$ over $\OO_K$.  The Hodge $\D$-conjecture asserts that the map 
	$$H^3_{\M}(X,\Q(2)) \stackrel{\partial}{\longrightarrow} H^2_{\M}(\XX_k,\Q(1))$$
	is surjective. This is equivalent to showing that for any cycle in the Neron-Severi group there is a motivic cycle bounding it. 
	
	In several cases, if $X$ is a surface, the rank of the Neron-Severi of the special fibre is greater than that of the general fibre. In these cases, For instance, in the case of a product of elliptic curves without $CM$, in the special fibre the Frobenius induces complex multiplication and the graph of the Frobenius is a new cycle. 
	
	In some instances, the enumerative geometry argument might work directly in mixed characteristic (at least for odd primes). For instance, the statement that there is a conic tangent to  $5$ lines  general position is purely algebraic and extends to the mixed characteristic case.

	 \subsection{Relations with modular forms}
	 
	 The Heegner divisors on the moduli space of Abelian surfaces are intimately connected with modular forms, as in the case of the theorems of Gross-Zagier \cite{grza}, Hirzebruch-Zagier \cite{hiza}, van der Geer \cite{vdg} and Hermann \cite{herm}.
	 
	 The theorems state that Heegner divisors give rise to coefficients of modular forms. Borcherds \cite{borc} showed that all these theorems can be proved in a similar manner. The idea is that to show a certain power series is a modular form of a certain weight $k$, it suffies to show its coefficients satisfy relations between coefficients of modular forms of that weight. The space of `relations between coefficients of modular forms of weight $k$' is the space of weakly holomorphic modular forms of weight $2-k$. 
	 
	 So an approach to this theorem is to relate weakly holomorphic modular forms of weight $2-k$ with `relations between Heegner divisors'. Borcherds did so by showing that his lift of weakly holomorphic modular forms gave rise to functions with divisoral support on Heegner divisors - or in other words a relation of rational equivalence among Heegner divisors. 
	 
	 There are similar theorems expected  for higher codimension  Heegner cycles in the universal families over these Shimura varieties. Gross and Zagier \cite{grza} that Heegner divisors on a modular curve are related to modular forms of weight $\frac{3}{2}$. Zhang \cite{zhan} showed that Heegner cycles on self product of the universal family are related to higher weight modular forms. For instance if one consideres the threefold which is compactification of self-product the universal elliptic curve, then the Heegner cycles are related to coefficients of weight $\frac{5}{2}$.
	 
	 It it intriguing to speculate if these can be proved along the lines of Borcherds thereom. The modular form formalism extends without any problem. Relations among Heegner cycles are given by elements of the motivic cohomology group so the question becomes `Can one construct motivic cycles from weakly holomorphic modular forms?'. Borcherds theorem is an example of this as functions are elements of the motivic cohomology group $H^1_{\M}(S_{\eta},\Q(1))$ where $S_{\eta}$ is generic point of the modular variety. 
	 
	 Borcherds lifts are functions, so one cannot expect it to directly work for higher codimensions. However, given a motivic cycle in the generic fibre of the the universal family one can consider its regulator. This is a current on certain forms and for a suitable choice of form varying over the family computing the regulator on it gives a function on the base. Hence one can ask whether a certain Borcherds lift of weakly holomorphic forms is the regulator of a motivic cycle. In \cite{sree-simple}
	  and \cite{sree-split} we provide some evidence of this for codimension $2$. 
	 
	 In this paper what we construct are relations among codimension two cycles in the universal family of $K3$ surfaces of degree $2$ and one might expect that once again there is a relation between the regulator of these cycles and Borcherds lifts of modular forms.

	 \bibliographystyle{alpha}
	 \bibliography{AlgebraicCycles.bib}

\end{document}